%
%
%

\documentclass[11pt]{article}
\usepackage{anysize}\marginsize{3.5cm}{3.5cm}{2cm}{3cm}
\usepackage{amssymb}


\newtheorem{satz}{Satz}[section]

\newtheorem{corollary}[satz]{Corollary}
\newtheorem{definition}[satz]{Definition}
\newtheorem{example}[satz]{Example}

\newtheorem{lemma}[satz]{Lemma}

\newtheorem{remark}[satz]{Remark}

\newtheorem{theorem}[satz]{Theorem}
\newtheorem{conjecture}[satz]{Conjecture}
\newtheorem{thevarthm}[satz]{\varthmname}

\newenvironment{varthm*}[1]{\trivlist\item[]{\bf #1.}\it}{\endtrivlist}

\def\vare{\varepsilon}
\newcommand\eqnref[1]{(\ref{#1})}
\newcommand\lra{\longrightarrow}
\newcommand\bbP{\mathbb P}
\newcommand\bbZ{\mathbb Z}
\newcommand\calo{{\cal O}}
\newcommand\cali{{\cal I}}
\newcommand\newop[2]{\def#1{\mathop{\rm #2}\nolimits}}

\newop\mult{mult}
\newop\upper{upper}
\def\ben{\begin{eqnarray}}
\def\een{\end{eqnarray}}


\begin{document}

\title{Seshadri fibrations of algebraic surfaces}
\author{Wioletta Syzdek and Tomasz Szemberg}
\date{}
\maketitle

\begin{abstract}
   {We refine results of \cite{HwaKeu03} and \cite{SzeTut04} which relate
   local invariants - Seshadri constants - of ample line bundles on surfaces
   to the global geometry - fibration structure. We show that
   the same picture emerges when looking at Seshadri constants measured
   at any finite subset of the given surface.}
\end{abstract}

\section{Introduction}
\label{intro}

   Seshadri constants were introduced by Demailly \cite{Dem92}
   in an attempt to tackle the Fujita Conjecture \cite{Fuj87}. They quickly
   became an object of independent studies. Nakamaye \cite{Nak03}
   observed that relatively small values of Seshadri constants in
   a general (and hence every) point of an algebraic surface enforce
   a fiber space structure on that surface. The same principle was
   exhibited by Hwang and Keum \cite{HwaKeu03} for varieties of
   arbitrary dimensions. However only in the case of surfaces, there
   are some effective bounds. In \cite{SzeTut04} Tutaj-Gasi\'nska
   and the second author gave a sharp bound for imposing on a surface
   a fibration by Seshadri curves. The cubic
   surface provides an example showing that the obtained bound is in fact optimal.

   In the present paper we study the geometry of surfaces a little bit closer.
   First we show that the cubic surface is the only one for which
   the bound from \cite{SzeTut04} is sharp. Hence we are in a position
   to provide a better bound for all other surfaces.

   Secondly we turn to multiple point Seshadri constants. Somehow
   surprisingly the situation turns out to be similar to that of
   Seshadri constants in a single point. In a sense it is even better,
   as with the number of points increasing our bounds converge
   to the maximal possible value. A similar asymptotic verification
   of the Nagata-Biran Conjecture (see \ref{NBconjecture})
   was obtained before with different methods by Harbourne
   \cite[Theorem I.1]{Har03}.
   We conclude showing that our bounds are optimal
   for arbitrary number of points.

\section{Preliminaries and auxiliary results}
\label{sec:1}

   In this section we recall basic properties of Seshadri constants
   and collect some helpful inequalities.

   First we recall the following definition. Here $r$ is a positive integer.

\begin{definition}
   Let $X$ be a smooth projective variety, let $L$ be an ample line
   bundle on $X$ and let $P_1,\dots, P_r\in X$ be mutually distinct points.
   The $r$-tuple {\rm Seshadri constant} of $L$ at $P_1,\dots, P_r$ is the real number
   $$
      \vare(L;P_1,\dots,P_r)=\inf_{C\cap\{P_1,\dots, P_r\}\neq\emptyset}
      \frac{L.C}{\sum\mult_{P_i}C} \ ,
   $$
   where the infimum is taken over all curves passing
   through at least one of the points $P_1,\dots,P_r$.\\
   We say that a curve $C$ is a {\it Seshadri curve
   for the $r$-tuple $P_1,\dots,P_r$} if
   $$\vare(L;P_1,\dots,P_r)=\frac{L.C}{\sum\mult_{P_i}C}.$$
\end{definition}

   Let $f:Y\lra X$ be the blowing up of $P_1,\dots,P_r\in X$
   with exceptional divisors $E_1,\dots,E_r$. Equivalently
   the Seshadri constant can be computed as
   $$
      \vare(L;P_1,\dots,P_r)=
      \sup\left\{\lambda>0:\, f^*L-\lambda\cdot\sum_{i=1}^rE_i \mbox{ is nef}\right\}.
   $$
   Since the self-intersection of a nef line bundle is non-negative, it follows
   that there is an upper bound:
   $$
      \vare(L;P_1,\dots,P_r)\leq\sqrt[\dim X]{\frac{L^{\dim X}}{r}}=:\vare_{\upper}(L;r).
   $$
   As a function on $X^r$ Seshadri constant
   $\vare(L;\cdot,\dots,\cdot)$ is semi-continuous and has
   the maximal value at a very general point of $X^r$ i.e. on a
   subset of $X^r$ which is the complement of a union of at most
   countably many Zariski closed subsets. We
   abbreviate this maximal value by $\vare(L;r)$.
\begin{remark}\label{scurve}
   We recall that on a surface $X$ a strict inequality
   $$
      \vare(L;P_1,\dots,P_r)<\vare_{\upper}(L;r),
   $$
   implies via the real valued Nakai-Moishezon criterion \cite{CamPet90}
   that there is a Seshadri curve
   for the $r$-tuple $P_1,\dots,P_r$. Such a curve can be assumed to be reduced
   and irreducible.\\
   In particular if
   $$
      \vare(L;r)<\vare_{\upper}(L;r),
   $$
   then there is a Seshadri curve through every $r$-tuple of points on $X$.
\end{remark}
   The following lemma which is due to Xu \cite[Lemma 1]{Xu95} will
   be used to estimate the self-intersection of Seshadri curves.
\begin{lemma}\label{xu}
  Let $X$ be a smooth projective surface, let $(C_t,(P_1)_t,\dots,(P_r)_t)_{t\in T}$ be
  a non-trivial
  one parameter family of pointed reduced and irreducible curves on $X$ and let $m_i$
  be positive integers such that $\mult_{(P_i)_t}C_t\geq m_i$ for all $i=1,\dots, r$. Then
  $$
     \begin{array}{lcl}
     \mbox{for } r=1 \mbox{ and } m_1\geq 2 && C_t^2\geq m_1(m_1-1)+1 \mbox{ and }\\
     &&\\
     \mbox{for } r\geq 2 && C_t^2\geq \sum_{i=1}^r m_i^2 - \min\{m_1,\dots,m_r\}.
     \end{array}
  $$
\end{lemma}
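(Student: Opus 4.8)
I would follow Xu's original approach. After replacing the base $T$ by the normalization of a general irreducible curve through a chosen point of it, I may assume $T$ is a smooth affine curve, and I form the total space $\pi:\mathcal C\to T$ of the family together with the morphism $f:\mathcal C\to X$ induced by $\mathcal C\subset T\times X$; let $\Sigma_1,\dots,\Sigma_r\subset\mathcal C$ be the (multi)sections carrying the marked points and set $B_i:=f(\Sigma_i)$, the locus swept out by $(P_i)_t$. Since the family is non-trivial and each $C_t$ is reduced, irreducible and one-dimensional, $f$ cannot map $\mathcal C$ onto a curve, so $f$ is dominant and generically finite and a general member genuinely moves. A marked point $(P_i)_t$ that does not vary with $t$ contributes a fixed $m_i$-fold point common to all members, which only strengthens the inequalities; so the substantive case, which I treat, is the one in which every $B_i$ is a curve.

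The heart of the argument is to compute $C_t^2$ through two general members. Fix general $t_1\ne t_2$ and put $C:=C_{t_1}$, $C':=C_{t_2}$; these are distinct reduced irreducible curves, the points $(P_1)_{t_1},\dots,(P_r)_{t_1}$ are pairwise distinct, and, $C$ and $C'$ being algebraically equivalent,
$$
   C_t^2 \;=\; C\cdot C' \;=\; \sum_{p\in C\cap C'} i_p(C,C').
$$
I would estimate the contribution of the cluster of intersection points near each $(P_i)_{t_1}$ by degenerating $t_2\to t_1$: the $m_i$-fold point $(P_i)_{t_2}$ of $C'$ then collides with the $m_i$-fold point $(P_i)_{t_1}$ of $C$, the two points approaching one another along the smooth branch of $B_i$. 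Blowing up $X$ at the points $(P_i)_{t_1}$ — or, more cleanly, resolving the total space $\mathcal C$ along the $\Sigma_i$ and working there — one tracks how the total transform of $C'$ degenerates: its flat limit picks up the exceptional divisor over $(P_i)_{t_1}$ with multiplicity $m_i$, and a local analysis of the tangent cones of $C$ and $C'$ at the colliding points yields, in the generic configuration,
$$
   \sum_{p\to (P_i)_{t_1}} i_p(C,C') \;\ge\; m_i^2 .
$$
As the $r$ clusters are disjoint, summing already gives the essentially correct shape $C_t^2\ge\sum_i m_i^2$.

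What remains is to pin down the two cases in the statement. For $r=1$ the weakest outcome of the local analysis is $\sum_{p\to P_{t_1}} i_p(C,C')\ge m_1(m_1-1)$, occurring precisely when the tangent directions of $C$ and of $C'$ at the colliding points are matched up under parallel transport along $B_1$; since $C$ and $C'$ are distinct irreducible curves their intersection carries at least one further point, whence $C_t^2\ge m_1(m_1-1)+1$. For $r\ge 2$ the bound is extracted instead from a global intersection-theoretic inequality on a suitable blow-up of the total space (the self-intersection of the proper transform of a general member being non-negative), in which the correction term $\min\{m_1,\dots,m_r\}$ appears naturally from the least of the multiplicities, giving $C_t^2\ge\sum_i m_i^2-\min_i m_i$. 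The main obstacle is exactly this local and intersection-theoretic bookkeeping: proving, via semicontinuity of local intersection multiplicities as $t_2\to t_1$ together with a careful treatment of the tangent cones along the curves $B_i$, that colliding $m_i$-fold points force a local intersection index of at least $m_i^2$ — respectively $m_i(m_i-1)$ — and isolating precisely where the corrections $+1$ (for $r=1$) and $-\min_i m_i$ (for $r\ge 2$) are gained or lost. This is the technical core of Xu's proof in \cite[Lemma~1]{Xu95}.
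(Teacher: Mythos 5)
The paper does not prove this lemma at all -- it is quoted from \cite{Xu95} -- so your sketch has to stand on its own, and its central estimate does not. You claim that when the $m_i$-fold point of $C'=C_{t_2}$ collides with the $m_i$-fold point of $C=C_{t_1}$ the local intersection satisfies $\sum_{p\to (P_i)_{t_1}} i_p(C,C')\ge m_i^2$ ``in the generic configuration''. This is false for a moving singular point: the limit as $t_2\to t_1$ of the cycle $C\cap C_{t_2}$ is cut out on $C$ by the first-order deformation (Kodaira--Spencer) section, which at a point moving with velocity $v$ agrees, modulo terms of order $\ge m_i$, with $-\partial_v f$, a function of order $m_i-1$; the generic local contribution is therefore exactly $m_i(m_i-1)$, not $m_i^2$. (Concretely, for two nearby translates of a nodal cubic with nodes at distance $t$, the intersection near the nodes consists of two transverse points, giving $2=m(m-1)$, not $4$.) So the ``essentially correct shape $C_t^2\ge\sum m_i^2$'' is not what the collision argument yields; it yields only $\sum m_i(m_i-1)=\sum m_i^2-\sum m_i$, and the entire content of the lemma for $r\ge 2$ is precisely the upgrade of the loss $\sum m_i$ to the single loss $\min_i m_i$. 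Your proposal never supplies a mechanism for this: the alternative you gesture at (non-negativity of the self-intersection of a proper transform on a blow-up of the total space) cannot work, since in the sharp examples $C_t^2-\sum m_i^2=-\min_i m_i$ is negative, and on the blow-up of $X$ at the points of one member the other members generically miss the exceptional divisors entirely.

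The actual mechanism in Xu's (and K\"uchle's) argument is to exploit the freedom in choosing the one-parameter family: one specializes to a subfamily in which all marked points except the one of smallest multiplicity are held fixed, so that two distinct general members share genuine $m_i$-fold points at the $r-1$ stationary locations (contributing $\ge m_i^2$ each), while only the single moving point contributes the weaker $m_r(m_r-1)$; note that some such genericity of the points is genuinely needed, since for a bare non-trivial one-parameter family the stated bound can fail (e.g.\ the fibers of $\bbP^1\times\bbP^1$ with two marked points moving along them have $C_t^2=0<1$). Your treatment of the case $r=1$ has the same kind of gap: the claim that ``their intersection carries at least one further point'', which is what produces the $+1$, does not follow from $C\ne C'$ being irreducible -- two distinct curves may meet in a single point, and a priori the whole limit cycle of degree $C_t^2$ could concentrate at the colliding point with multiplicity exactly $m_1(m_1-1)$; ruling this out by the tangent-cone analysis is the heart of Xu's proof and is exactly the step you defer to the citation. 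As it stands, the proposal reproduces the correct framework but leaves unproved (indeed, in one place asserts incorrectly) the two estimates that constitute the lemma.
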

   The second lemma was obtained by K\"uchle in \cite{Kue96} and is purely numerical.
\begin{lemma}\label{kuechle}
   Let $r\geq 2$ and $m_1,\dots ,m_r\in {\bbZ}$ be
   integers with $m_1\geq\dots\geq m_r\ge 1$ and $m_1\ge 2$. Then we
   have
   $$(r+1)\sum_{i=1}^r m_i^2 > \left(\sum_{i=1}^r m_i\right)^2+m_r(r+1).$$
\end{lemma}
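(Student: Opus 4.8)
The plan is to reduce the asserted inequality to the Cauchy--Schwarz inequality together with a cheap term-by-term estimate, after first isolating the degenerate configuration in which all the $m_i$ coincide.

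Concretely, I would write the left-hand side as $(r+1)\sum_{i=1}^r m_i^2 = r\sum_{i=1}^r m_i^2 + \sum_{i=1}^r m_i^2$ and invoke Cauchy--Schwarz in the form $r\sum_{i=1}^r m_i^2 \ge \bigl(\sum_{i=1}^r m_i\bigr)^2$, recalling that equality holds exactly when $m_1=\dots=m_r$. This reduces the claim to proving $\sum_{i=1}^r m_i^2 \ge m_r(r+1)$, with the proviso that at least one of the two inequalities used be strict. If $m_1=\dots=m_r=:m$, then $m\ge 2$ by hypothesis, Cauchy--Schwarz is an equality, but $\sum m_i^2 - m_r(r+1) = rm^2 - m(r+1) = m\bigl(r(m-1)-1\bigr)\ge m\ge 2>0$ since $r(m-1)\ge r\ge 2$; so the second inequality is strict and we are done. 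Otherwise the $m_i$ are not all equal, so $m_1\ge m_r+1$ and hence $m_1^2\ge m_r^2+2m_r+1$, while $m_i^2\ge m_r^2$ for $2\le i\le r$; summing gives $\sum m_i^2\ge rm_r^2+2m_r+1$, and $rm_r^2+2m_r+1-m_r(r+1)=rm_r(m_r-1)+m_r+1\ge 2>0$ because $m_r\ge 1$. Since Cauchy--Schwarz is strict in this case (the entries differ), the desired strict inequality follows once more. As the two cases are exhaustive, this completes the argument.

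I do not expect a genuine obstacle here; the only point requiring a little care is tracking the equality case of Cauchy--Schwarz, which is precisely where the hypothesis $m_1\ge 2$ is used: it is exactly what excludes the configuration $m_1=\dots=m_r=1$ and makes the ``all equal'' branch go through. An alternative, equally elementary route would be to substitute $m_i=m_r+n_i$ with $n_1\ge\dots\ge n_r=0$ and expand both sides, but the case split above seems shorter and more transparent.
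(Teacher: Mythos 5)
Your argument is correct: splitting off one copy of $\sum m_i^2$, applying Cauchy--Schwarz in the form $r\sum m_i^2\ge\bigl(\sum m_i\bigr)^2$, and then checking $\sum m_i^2>m_r(r+1)$ separately in the ``all equal'' case (where $m_1\ge 2$ forces $rm^2-m(r+1)=m\bigl(r(m-1)-1\bigr)>0$) and the ``not all equal'' case (where $m_1\ge m_r+1$ gives $\sum m_i^2\ge rm_r^2+2m_r+1>m_r(r+1)$) is airtight, and your remark that $m_1\ge2$ is exactly what rules out $m_1=\dots=m_r=1$ (where the inequality genuinely fails) shows you have located the sharp hypothesis. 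Note that the paper itself does not prove this lemma at all: it is quoted from K\"uchle's article \cite{Kue96} and used as a black box, so there is no in-paper proof to compare with; your short, self-contained Cauchy--Schwarz argument is a perfectly adequate substitute for the citation.
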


\section{Single point Seshadri constants and fibrations}
\label{sec:2}
   Recall that in the case of algebraic surfaces Hwang and Keum proved
   the following result \cite[Theorem 2]{HwaKeu03}.

\begin{theorem}[Hwang-Keum]\label{hk}
   Let $X$ be a projective surface and $L$ an ample line bundle on
   $X$ with
   $$
     \vare(L;1)<\sqrt{\frac34}\cdot \vare_{\upper}(L;1).
   $$
   Then there is a fibration of $X$ whose fibers are Seshadri curves of
   $L$.
\end{theorem}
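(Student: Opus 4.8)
The plan is to use the hypothesis to force the Seshadri curves through very general points of $X$ to be smooth at those points and to have self-intersection zero, and then to recognize such a family of curves as the fibers of a fibration. Since $\dim X=2$ one has $\vare_{\upper}(L;1)=\sqrt{L^2}$, so the assumption reads $\vare^2<\frac34 L^2$ for $\vare:=\vare(L;1)$; in particular $\vare<\vare_{\upper}(L;1)$, and Remark \ref{scurve} provides a reduced irreducible Seshadri curve through every point of $X$. I would first pass to the dense set of points $x$ with $\vare(L;x)=\vare$, choose a Seshadri curve $C_x$ through each such $x$, and shrink once more so that the discrete data $m:=\mult_xC_x$, $L.C_x=m\vare$ and $C_x^2$ do not depend on $x$. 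As $x$ ranges over this set the one-dimensional curves $C_x$ sweep out $X$, so they form a non-trivial one parameter family of pointed reduced and irreducible curves in the sense of Lemma \ref{xu}, with multiplicity at least $m$ at the marked point.

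Next I would bound $C_x^2$ from both sides. The Hodge index theorem gives $C_x^2\le (L.C_x)^2/L^2=m^2\vare^2/L^2<\frac34 m^2$. If $m\ge 2$, Lemma \ref{xu} in the case $r=1$ applies to the family just constructed and yields $C_x^2\ge m(m-1)+1$; combining the two estimates gives $m(m-1)+1<\frac34 m^2$, i.e.\ $(m-2)^2<0$, which is absurd. Hence $m=1$: the Seshadri curve $C_x$ through a very general point $x$ is smooth at $x$, satisfies $L.C_x=\vare$, and has $C_x^2<\frac34$. Since $C_x$ moves in a connected, positive-dimensional family of irreducible curves, it is algebraically equivalent to some member $C_{x'}\ne C_x$, so $C_x^2=C_x.C_{x'}\ge 0$, and integrality forces $C_x^2=0$.

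Finally I would extract the fibration. The curve $C_x$ is nef --- it moves, hence meets every other irreducible curve non-negatively, and $C_x^2=0$ --- and it moves in a connected family covering $X$; distinct members of that family are algebraically equivalent with product $C_x^2=0$, hence pairwise disjoint. By the standard structure theory of surfaces a mobile curve of self-intersection zero is a fiber of a fibration, so after Stein factorization the family defines a morphism $\pi:X\to B$ onto a smooth curve whose general fiber is one of the $C_x$, that is, a Seshadri curve of $L$; since all fibers are algebraically equivalent, $L.F=\vare$ for every fiber $F$, and a short extra argument --- using once more that every point of $X$ lies on a Seshadri curve --- identifies the remaining fibers as Seshadri curves as well. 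The delicate point is the organization in the first paragraph: turning the a priori unrelated Seshadri curves through very general points into a genuine non-trivial family of pointed curves with constant multiplicity $m$, since this is exactly what Xu's Lemma needs; granting that, the remainder is the elementary numerics above together with the classical fact about mobile curves of square zero.
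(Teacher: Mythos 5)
Your proposal is correct and follows essentially the same route as the paper's own machinery: the paper only quotes this theorem from \cite{HwaKeu03} and \cite{SzeTut04}, but your combination of Xu's Lemma \ref{xu} with the Hodge Index Theorem to exclude $m\ge 2$ via $(m-2)^2<0$, then forcing $C_x^2=0$ for the moving family of curves smooth at very general points and extracting the fibration from a mobile irreducible curve of square zero, is exactly the mechanism the paper deploys in the proofs of Theorems \ref{itiscubic} and \ref{multipoint} (where the $m=1$, $D_x^2=0$ case is referred back to \cite{SzeTut04}). The only step left slightly informal, which you yourself flag, is upgrading ``the general fiber is a Seshadri curve'' to the statement about all fibers, and that is treated at the same level of detail in the cited sources.
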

   It was shown in \cite{SzeTut04} that if $X\subset\bbP^3$ is a smooth
   cubic surface and $L=\calo_X(1)$, then
   $$
      \vare(L;1)=\sqrt{\frac34}\cdot\vare_{\upper}(L;1)=\frac32
   $$
   and that $X$ is not fibered by Seshadri curves. This means that the factor
   of $\sqrt{\frac34}$ in the above cannot be improved in general.
   Here we show however that the cubic is the only example of this kind.
\begin{theorem}\label{itiscubic}
   Let $X$ be a smooth projective surface and $L$ an ample line bundle
   on $X$ such that
   $$
      \vare(L;1)=\sqrt{\frac34}\cdot\vare_{\upper}(L;1).
   $$
   If $X$ is not fibered by Seshadri curves, then $X$ is the cubic surface in $\bbP^3$
   and $L$ is the hyperplane bundle.
\end{theorem}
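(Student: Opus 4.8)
The plan is to run the argument behind Theorem~\ref{hk} and squeeze its equality case. Since $\vare(L;1)=\sqrt{\frac34}\cdot\vare_{\upper}(L;1)$ lies strictly below $\vare_{\upper}(L;1)$, Remark~\ref{scurve} produces through a very general point $P$ a reduced irreducible Seshadri curve $C$, and as $P$ varies these sweep out $X$, so $C$ moves in a non-trivial family; put $m:=\mult_PC$. With $\vare:=\vare(L;1)$ we have $L.C=m\vare$ and $\vare^2=\frac34L^2$, hence $(L.C)^2=\frac34m^2L^2$; the Hodge index inequality $(L.C)^2\ge L^2C^2$ then gives $C^2\le\frac34m^2$. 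If $m\ge2$, Lemma~\ref{xu} yields $C^2\ge m^2-m+1$, whence $m^2-m+1\le\frac34m^2$, i.e.\ $(m-2)^2\le0$; if $m=1$, then $C^2\le\frac34$ forces $C^2=0$, hence a fibration of $X$ by Seshadri curves, which is excluded. Therefore $m=2$ and $C^2=3$, all the inequalities above are equalities, and in particular $C$ is numerically proportional to $L$. From $L.C=2\vare=\sqrt{3L^2}$ and $C^2=3$ one computes $C\equiv\frac1vL$ with $v:=\sqrt{L^2/3}\in\bbZ_{>0}$, so $L^2=3v^2$.

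Next I would normalise the polarisation and pin down $h^0(L)$. The class of $C$ is a positive multiple of the ample class of $L$, hence ample, so $\calo_X(C)$ is an ample line bundle with $\calo_X(C)^2=3$, sharing the Seshadri curves of $L$ and still satisfying $\vare(\calo_X(C);1)=\sqrt{\frac34}\cdot\vare_{\upper}(\calo_X(C);1)$, with $X$ again not fibred by them; replacing $L$ by $\calo_X(C)$ we may assume $L^2=3$ and $C\in|L|$ reduced and irreducible. Now a dimension count identifies the remaining structure. The double-point Seshadri curves $C_P$, $P$ general, are reduced, irreducible, singular, numerically equivalent to $L$ with $C_P^2=3$, and sweep out $X$; bounding their family by sections of the equisingular normal sheaf on the normalisation forces the $C_P$ to be rational, hence $q(X)=0$ (an irregular $X$ would itself be fibred by the $C_P$). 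With $q=0$ every $C_P$ lies in $|L|$, while a general member of $|L|$ is smooth off the base locus (Bertini); the resulting incidence-variety count gives $\dim|L|\ge3$, i.e.\ $h^0(L)\ge4$. Since $C$ is a prime divisor in $|L|$, the system $|L|$ has no fixed component.

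It remains to identify $(X,L)$ through $\varphi:=\varphi_{|L|}$. After blowing up the base points of $|L|$, $\mu\colon\widetilde X\to X$, the system $|\widetilde L|$ with $\widetilde L=\mu^*L-\sum m_jE_j$ is base-point-free, $0\le\widetilde L^2=3-\sum m_j^2$, and $\varphi_{|\widetilde L|}\colon\widetilde X\to\bbP^{h^0(L)-1}$ has non-degenerate image spanning a $\bbP^{\ge3}$. If $\widetilde L^2=0$, then $|\widetilde L|$ is composed with a pencil and a general (reduced irreducible) member of $|L|$ would be a fibre, forcing $h^0(L)\le2$: impossible. If $\widetilde L^2\in\{1,2\}$ — i.e.\ $|L|$ has a base point — then the image of $\varphi_{|\widetilde L|}$ is a quadric $Q\subset\bbP^3$ and $\varphi_{|\widetilde L|}$ is birational, so the strict transform of a general double-point Seshadri curve $C_P$ would be the pullback of a hyperplane section of $Q$ singular at the general point $\varphi(P)\in Q$ — a reducible or non-reduced conic — contradicting that $C_P$ is reduced and irreducible. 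Hence $|L|$ is base-point-free and $\varphi\colon X\to\bbP^{h^0(L)-1}$ is a morphism onto a non-degenerate surface of degree $3$ (the alternative $\deg\varphi=3$ onto a plane would give $h^0(L)=3$); this surface cannot span a $\bbP^{\ge4}$, for there it would have minimal degree, hence be a cubic scroll or a cone over a twisted cubic, each ruled by lines $\ell$ with $L.\ell=1$ (giving $\vare(L;1)\le1<\frac32$), and in the cone case $L$ would moreover fail to be ample. So $h^0(L)=4$ and $\varphi\colon X\to X'\subset\bbP^3$ is a birational morphism onto a cubic surface $X'$; as $X$ is smooth and $L=\varphi^*\calo_{X'}(1)$ is ample, $\varphi$ contracts no curve and is an isomorphism. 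Thus $X$ is the smooth cubic surface in $\bbP^3$ and $L$ is the hyperplane bundle.

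The first paragraph is forced by the Hodge index theorem together with Lemma~\ref{xu}; the real work sits in the last two. Establishing $h^0(L)\ge4$ requires the preliminary reduction to $q(X)=0$, and the classification of $\varphi_{|L|}$ is the crux: excluding base points hinges on the fact that a Seshadri curve is reduced and irreducible whereas a pulled-back singular conic is neither, and excluding an ambient $\bbP^{\ge4}$ uses that a scroll carries a line $\ell$ with $L.\ell=1$, incompatible with $\vare(L;1)=\frac32$.
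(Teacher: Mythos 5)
Your first half runs parallel to the paper's proof: the Hodge--index computation combined with Lemma \ref{xu}, forcing $m=2$, $C^2=3$ and numerical proportionality to $L$, is exactly the paper's first step, and your one-line appeal to the equisingular normal sheaf on the normalisation is the same deformation-theoretic mechanism that the paper spells out via the Kodaira--Spencer map in the style of Ein--Lazarsfeld (a two-parameter family gives two sections of a degree-one bundle on the partial normalisation, forcing rationality); your passage to $q(X)=0$ and $\dim|L|\geq 3$ likewise matches the paper's count of the two-parameter family of singular members of $|D|$. Where you genuinely diverge is the endgame: the paper proves directly that the system of Seshadri curves is very ample (the intersection number $D_y.D_{y_1}=3$ prevents two singular members from meeting at points singular on both, giving base-point-freeness and then separation of points and tangents), whereas you classify the morphism $\varphi_{|L|}$ by degree, excluding composition with a pencil, a quadric or plane image, and the minimal-degree surfaces in $\bbP^4$. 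This is a viable alternative, and in one respect more careful than the paper: the exclusion of the cubic scroll and the cone over the twisted cubic in $\bbP^4$, via the moving lines of $L$-degree $1$ through a very general point, is at best implicit in the paper's last sentence.

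Two steps of your endgame need repair. First, in the case $\widetilde L^2=2$ you assert that the strict transform of a general $C_P$ is ``the pullback of a hyperplane section of $Q$''; this holds only if $C_P$ meets the base point with the right multiplicity. What is true, and suffices, is that the birational map sends the irreducible $C_P$ to an irreducible curve of degree $2$ or $3$ on $Q$ which is singular at the general point $\varphi(P)$, and no such curve exists (an irreducible conic is smooth, and an irreducible cubic on a quadric is smooth away from the vertex); also $\widetilde L^2=1$ does not give a quadric but a plane, i.e.\ a degenerate image -- an even quicker contradiction. More seriously, your final inference ``birational, contracts no curve, hence an isomorphism'' needs the image cubic $X'$ to be normal: a finite birational morphism onto a non-normal surface is the normalisation map and is not an isomorphism, and non-normal cubics in $\bbP^3$ exist. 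They are, however, swept out by lines, so their preimages would form a moving family of curves of $L$-degree $1$ through very general points, contradicting $\vare(L;1)=\frac32$; you should add this (it is the same trick as your $\bbP^4$ exclusion), after which normality forces $X'\cong X$ smooth. Finally, note that your rescaling $L\mapsto\calo_X(C)$ means you identify the Seshadri-curve system, not $L$ itself, with the hyperplane system -- but this is precisely what the paper's own proof establishes, so nothing is lost relative to the paper.
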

\begin{proof}
   For any $x\in X$ let $D_x$ be an irreducible and reduced Seshadri curve for $x$
   (see Remark \ref{scurve}) with multiplicity $m_x=\mult_x(D_x)$ in that point.
   Further, let $X_0\subset X$ be an open and dense subset of $X$ on which the
   multiplicity of Seshadri curves is constant equal $m$.

   The proof goes in several steps. Here is the outline. First with Hodge Index
   we show that $m=2$ (note that for the cubic surface Seshadri constants are
   computed by tangent sections). In the second step we use Kodaira-Spencer map
   to prove that the curves $D_x$ are rational. Finally with some ad hoc arguments
   we conclude that $D_x$ are members of a linear system embedding $X$ as
   a cubic in $\bbP^3$.

   Let $x\in X_0$ be given. If $m=1$, then by Hodge Index Theorem we have
   $$
      \frac34L^2=(L.D_x)^2\geq L^2D_x^2,
   $$
   which implies $D_x^2=0$ and, as in the proof of \cite[Theorem]{SzeTut04},
   that there is a Seshadri fibration on $X$.

   Hence $m\geq 2$ and by Lemma \ref{xu} we have
   $$
      D_x^2\geq m(m-1)+1.
   $$
   Combining this inequality with our numerical assumptions on $L$
   and applying Hodge Index we obtain
   $$
      \frac34\geq\frac{m(m-1)+1}{m^2},
   $$
   which is equivalent to $(\frac12m-1)^2\leq 0$. This implies $m=2$ and
   since then there is an equality in the Hodge Index, we conclude that
   $D_x^2=3$ and $\calo_X(D_x)$ is ample, as it is numerically equivalent to
   some rational multiple of $L$.

   Now we turn to the rationality of $D_x$. Since the curves $D_x$ are reduced
   and $m=2$ it can happen only for finitely many points $y$ that
   $D_x=D_y$. This means that we have a two-parameter family of Seshadri curves.
   We fix $x_0$ in the interior of $X_0$ and a sufficiently small disk $\Delta$
   so that $\Delta\times\Delta\subset X_0$ is a neighborhood of $x_0$. Then
   the deformation $\left(D_{x_{(t,s)}}, x_{(t,s)}\right)_{\Delta\times\Delta}$
   of the pointed Seshadri curve $(D_0,x_o)$ determines
   a non-degenerate Kodaira-Spencer map
   $$
      \rho:T_0\Delta\times T_0\Delta\lra H^0(D_{x_0},\calo_{D_{x_0}}(D_{x_0})).
   $$
   We abbreviate $D=D_{x_0}$.
   As in \cite[Corollary 1.2]{EinLaz92} we conclude that
   there are two independent sections $\rho(\frac{d}{dt})$ and
   $\rho(\frac{d}{ds})$ in $H^0(D,\calo_{D}(D)\otimes\cali_{x_0})$,
   where $\cali_{x_0}$ is the maximal ideal.

   Let $f:Y\lra X$ be the blowing up of $X$ at $x_0$ with the
   exceptional divisor $E$ and let
   $D'=f^*D-2E$ be the proper transform of $D$. By the projection formula
   we have that $M=f^*\calo_D(D)\otimes\calo_Y(-E)$ has at least two global
   sections. On the other hand $\deg M=1$, which implies that $D'$ is rational
   and hence so is $D$.

   Thus $X$ is rationally connected (any two curves $D_x$ and $D_y$ intersect),
   so it is a rational surface. In particular all curves $D_x$ are linearly equivalent
   and equivalent to $L$. We denote the linear system generated by
   the curves $D_x$ simply by $|D|$. Note that we have thus
   obtained a complete family of Seshadri curves. Since for a very general point
   $x\in X$ there exists in $|D|$ a  curve singular at $x$, we
   find such a curve (possibly reducible) for every point on the surface.

   Now we show that the linear system $|D|$ is base point free. To this end let
   $y\in X$ be fixed. There is a curve $D_y\in|D|$ with $\mult_yD_y\geq 2$.
   Let $y_1$ be a general smooth point of $D_y$. Again, there exists an irreducible curve
   $D_{y_1}\in|D|$ with $\mult_{y_1}D_{y_1}=2$.
   We claim that this curve does not go through y.
   Indeed, since $D_y$ and $D_{y_1}$ have no common components and their
   intersection product is $3$ they cannot meet each other in singular points.

   Taking into account that $|D|$ is ample and base point free, the image of
   the induced mapping of $X$ has dimension $2$. Bertini's theorem tells us
   that a general member of $|D|$ is smooth and irreducible. Since we have already
   identified a two-parameter family of singular divisors in $|D|$, it implies
   that $\dim |D|\geq 3$.

   Finally we prove that the linear system $D$ defines an embedding of $X$.
   First we show that $|D|$ separates points. This goes similarly as
   the global generation. Let $y_1$ and $y_2$ be two different points on $X$
   and let $D_{y_1}$ and $D_{y_2}$ be the corresponding Singular curves in $|D|$.
   If $y_1\notin D_{y_2}$, then we are done. If $y_1\in D_{y_2}$
   it cannot be $y_2\in D_{y_1}$
   at the same time, otherwise it contradicts $D_{y_1}.D_{y_2}=3$, so we are done again.

   Now, let $x\in X$ and $\overrightarrow{v}\in T_xX$ be fixed and let $D_x$ be the singular curve
   at $x$ in $|D|$. If $\overrightarrow{v}$ is not tangent to $D_x$, then we are done.
   Suppose that all curves in the system $|D\otimes\cali_x|$ have $\overrightarrow{v}$ as
   a tangent vector, then all these curves intersect $D_x$ only in $x$, as the
   intersection multiplicity already at that point is equal $3$ or they have
   a common component with $D_y$. On the other hand let $x_1\in D_x$ and $x_2\notin D_x$ be general.
   Dimension count shows that there is an irreducible curve
   $C\in|D\otimes\cali_x\otimes\cali_{x_1}\otimes\cali_{x_2}|$.
   Such a curve has no common components with $D_x$ and would have intersection
   multiplicity $\geq 4$, a contradiction.

   Summing up, we have shown that the linear system $|D|$ embeds $X$ as a surface
   of degree $3$ in a projective space. Since a complete embedding is non-degenerate,
   the image of $X$ must be a smooth cubic in $\bbP^3$.
\end{proof}
   Now we are in a position to improve the bound in Theorem \ref{hk}.
\begin{corollary}
   Let $X$ be a smooth projective surface and $L$ an ample line bundle on $X$.
   If
   $$
      \vare(L;1)<\sqrt{\frac79}\cdot\vare_{\upper}(L;1),
   $$
   then
   \begin{itemize}
      \item[a)] either $X$ is a cubic in $\bbP^3$ and $L=\calo_X(1)$,
      \item[b)] or $X$ is fibered by Seshadri curves.
   \end{itemize}
\end{corollary}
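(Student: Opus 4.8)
\proof
The plan is to bootstrap from Theorems \ref{hk} and \ref{itiscubic}. If
$$
   \vare(L;1)<\sqrt{\frac34}\cdot\vare_{\upper}(L;1),
$$
then Theorem \ref{hk} already gives alternative b). So I would assume
$$
   \sqrt{\frac34}\cdot\vare_{\upper}(L;1)\leq\vare(L;1)<\sqrt{\frac79}\cdot\vare_{\upper}(L;1),
$$
and, supposing that a) fails, that $X$ carries no Seshadri fibration; the task is then to conclude that $X$ is the cubic and $L=\calo_X(1)$. When the first inequality is an equality this is exactly Theorem \ref{itiscubic}, so the real content is to see that its proof survives under the weaker strict bound $\sqrt{\frac79}$.

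Accordingly I would re-run the proof of Theorem \ref{itiscubic} with $\sqrt{\frac34}$ replaced by the datum $\vare(L;1)<\sqrt{\frac79}\cdot\vare_{\upper}(L;1)$, keeping the notation $D_x$ and $m$ from that proof and recalling that $L.D_x=m\cdot\vare(L;1)$ and $\vare_{\upper}(L;1)^2=L^2$. If $m=1$, the Hodge Index Theorem gives
$$
   L^2\cdot D_x^2\leq(L.D_x)^2=\vare(L;1)^2<\frac79\,L^2,
$$
so $D_x^2<1$, hence $D_x^2\leq 0$; as the curves $D_x$ sweep out $X$ this forces $D_x^2=0$ and a Seshadri fibration, contrary to assumption. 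Thus $m\geq 2$, and Lemma \ref{xu} together with the Hodge Index Theorem yield
$$
   \frac79\,m^2>\frac{(L.D_x)^2}{L^2}\geq D_x^2\geq m(m-1)+1,
$$
that is $\frac29 m^2-m+1<0$, i.e. $(m-\frac32)(m-3)<0$; since $m$ is an integer $\geq 2$ this forces $m=2$, and then $3\leq D_x^2<\frac{28}{9}$ forces $D_x^2=3$. These are precisely the numerical inputs --- $m=2$, $D_x^2=3$, and the fact that the $D_x$ form a complete two-parameter family --- on which the rest of the proof of Theorem \ref{itiscubic} rests: the rationality of $D_x$ via the Kodaira--Spencer map, the base-point-freeness of $|D_x|$, and the separation of points and tangent vectors. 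That part then applies word for word and exhibits $X$ as the smooth cubic surface in $\bbP^3$ with $\calo_X(D_x)=\calo_X(1)$, and $L=\calo_X(1)$, exactly as in the proof of Theorem \ref{itiscubic}.

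The step I expect to need the most care is this reduction: one must verify that the geometric half of the proof of Theorem \ref{itiscubic} uses the equality $\vare(L;1)=\sqrt{\frac34}\cdot\vare_{\upper}(L;1)$ only through its consequences $m=2$ and $D_x^2=3$ --- in particular that the numerical proportionality of $\calo_X(D_x)$ with $L$ is not invoked afterwards --- so that nothing new is needed once these two facts are re-derived. It is worth remarking that $\sqrt{\frac79}$ is exactly the constant this argument permits: the exclusion of $m=3$ hinges on the coincidence $m(m-1)+1=7=\frac79\cdot 3^2$, so Xu's bound and the Hodge bound only just collide, and with any larger constant the same argument would let $m=3$ through.
\endproof
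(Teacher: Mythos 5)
Your argument is correct and is essentially the paper's own: the paper likewise reduces to the proof of Theorem \ref{itiscubic}, observing that under the weaker bound $m=1$ still forces a fibration and $m=2$ still yields the cubic (the geometric part of that proof only needs $m=2$ and $D_x^2=3$), and then rules out $m\geq 3$ by the same Xu--Hodge computation that you package as $(m-\tfrac32)(m-3)<0$. Your preliminary appeal to Theorem \ref{hk} is redundant but harmless, and your explicit re-derivation of $m=2$, $D_x^2=3$ together with the caveat about where the Hodge equality was used is, if anything, more careful than the paper's one-line reduction.
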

\begin{proof}
   We assume to the contrary that $X$ is neither a cubic nor is it fibered by Seshadri
   curves. From the proof of Theorem \ref{itiscubic} it follows immediately that
   the multiplicity of Seshadri curve at a general point of $X$ is at least $3$.
   On the other hand combining Lemma \ref{xu} with the Hodge Index and our
   numerical assumption, it is elementary to see that this is impossible.
\end{proof}

\section{Multiple point Seshadri constants and fibrations}
   We now pass to the second part of our paper and investigate the relationship
   between multiple point Seshadri constants and fibrations by Seshadri curves.
\begin{theorem}\label{multipoint}
   Let $X$ be a smooth projective surface, $L$ an ample line
   bundle on $X$ and $r\geq 2$ a fixed integer. If
   \ben\label{star}
      \vare(L;r)<\sqrt{\frac{r-1}{r}}\cdot\vare_{\upper}(L;r)
   \een
   then there exists a fibration $f:X\lra D$ over a curve $D$ such
   that given $P_1, \dots, P_r\in X$ very general, the fiber
   $f^{-1}(f(P_i))$ computes $\vare(L;P_1,\dots,P_r)$ for arbitrary
   $i=1,\dots, r$.\\
   Moreover the factor $\sqrt{\frac{r-1}r}$ is optimal for every $r$.
\end{theorem}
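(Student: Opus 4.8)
The plan is to mirror the single-point argument (Theorems \ref{hk} and \ref{itiscubic}) in the multiple-point setting, using Lemma \ref{xu} for $r\geq 2$ in place of its $r=1$ counterpart. Assume \eqnref{star} holds. By Remark \ref{scurve} there is a Seshadri curve $C$ through every very general $r$-tuple $P_1,\dots,P_r$; we may take it reduced and irreducible with multiplicities $m_i=\mult_{P_i}C$. The first step is to show that $C$ cannot contain all of $P_1,\dots,P_r$ for a very general tuple: if it did, then moving the points in an $r$-parameter family would produce a nontrivial family of pointed curves, so Lemma \ref{xu} gives $C^2\geq\sum m_i^2-\min_i m_i$, while \eqnref{star} together with the Hodge Index Theorem (applied to $L$ and $\calo_X(C)$) forces $(L.C)^2\geq L^2C^2$, i.e. $\frac{r-1}{r}\cdot\frac{L^2}{r}\cdot(\sum m_i)^2\geq L^2 C^2$, hence $\frac{r-1}{r^2}(\sum m_i)^2\geq \sum m_i^2-\min_i m_i$. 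Rearranged this reads $(r+1)\sum m_i^2\leq(\sum m_i)^2+ r\cdot\min_i m_i<(\sum m_i)^2+(r+1)\min_i m_i$ (using $\min_i m_i\geq 1$ only if it helps; one has to be slightly careful with the constant), which contradicts K\"uchle's Lemma \ref{kuechle} provided some $m_i\geq 2$. The remaining case $m_1=\dots=m_r=1$ has to be disposed of separately: then $C^2\geq 0$ by Lemma \ref{xu} (or directly), and equality in Hodge Index would force $C^2=0$ after checking the inequality is in fact strict, giving a fibration directly — but actually $m_i\equiv 1$ with $C$ through all $r$ points should be ruled out or shown to already yield the fibration as in the proof of \cite[Theorem]{SzeTut04}.

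Once we know $C$ passes through exactly one of the very general points, say $C$ is a Seshadri curve for the single point $P_j$ in the sense that $\vare(L;P_1,\dots,P_r)=\frac{L.C}{m_j}$, the idea is that the function $x\mapsto$ (a Seshadri curve through $x$) now behaves like a one-point Seshadri curve, and the bound $\sqrt{\frac{r-1}{r}}$ specializes. More precisely: for very general $x\in X$, picking the other $r-1$ points very general too, the curve $C$ we obtain is singular at $x$ (multiplicity $\geq 2$) unless $C^2=0$; a moving-lemma / Kodaira–Spencer argument as in Theorem \ref{itiscubic} shows $C$ varies in at least a two-dimensional family, and the self-intersection estimate combined with \eqnref{star} pins down $C^2=0$, which by the Nakai–Moishezon / real Hodge argument of \cite{SzeTut04} produces the desired fibration $f:X\to D$ with $C$ as a fiber. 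Then one checks that for very general $P_1,\dots,P_r$ the fiber $f^{-1}(f(P_i))$ computes $\vare(L;P_1,\dots,P_r)$ for each $i$: this follows because each fiber through $P_i$ contributes the same ratio $L.F$ (fibers are numerically equivalent), and $\sum\mult_{P_i}F$ is maximized by the fiber configuration since the points lie on distinct fibers.

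For optimality, the plan is to exhibit, for each $r$, a surface and line bundle realizing equality. The natural candidate is a blow-up construction: take $\bbP^2$ (or a suitable ruled/cubic-type surface) and an ample $L$ for which one can compute $\vare(L;r)$ exactly and see it equals $\sqrt{\frac{r-1}{r}}\,\vare_{\upper}(L;r)$ while no Seshadri fibration exists. Given the $r=1$ case is the cubic surface with $\vare(\calo_X(1);1)=\frac32=\sqrt{3/4}\cdot\vare_{\upper}$, I expect the general-$r$ example to be an analogous Del Pezzo-type or explicitly-blown-up surface where the Seshadri curves through $r$ general points are (proper transforms of) plane curves of controlled degree and multiplicity; one then verifies the equality numerically and checks directly that these curves do not organize into a fibration. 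Writing down this family and verifying the equality is, I expect, the main obstacle — the fibration half follows a well-trodden path, but producing a clean infinite family of sharp examples (rather than one ad hoc surface per $r$) requires the right geometric input.
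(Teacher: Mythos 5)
Your fibration half follows the paper's strategy (Xu plus Hodge Index plus K\"uchle), but two steps are not actually closed. First, the case $m_1=\dots=m_r=1$, which you leave as ``to be ruled out or shown to already yield the fibration'', is resolved in the paper by a one-line contradiction, not by producing a fibration: Lemma \ref{xu} for $r\ge 2$ gives $C_t^2\ge r-1$ for the moving Seshadri curve, so Hodge Index yields $\frac{L.C_t}{\sum m_i}=\frac{L.C_t}{r}\ge\frac{\sqrt{(r-1)L^2}}{r}=\sqrt{\frac{r-1}{r}}\,\vare_{\upper}(L;r)$, directly contradicting \eqnref{star}; your suggestion that this case might force $C^2=0$ and give the fibration is backwards. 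Second, and more seriously, you jump from ``$C$ does not contain all $r$ points'' to ``$C$ passes through exactly one point''. Your K\"uchle computation only treats the case $k=r$; a Seshadri curve through $2\le k\le r-1$ of the points is not excluded by anything you wrote. The paper deals with this by induction on $r$: if $m_r=0$ then $\frac{L.C_t}{\sum_{i\le r-1}m_i}<\sqrt{\frac{r-1}{r^2}L^2}<\sqrt{\frac{r-2}{(r-1)^2}L^2}$ for $r\ge 3$, so hypothesis \eqnref{star} holds for $r-1$ points, and the descent ends at $r=2$ with $m_2=0$, where the one-point ratio is below $\sqrt{\frac34}\,\vare_{\upper}(L;1)$ and Theorem \ref{hk} is quoted; no Kodaira--Spencer argument \`a la Theorem \ref{itiscubic} is needed, and your sketch of one (``singular at $x$ unless $C^2=0$'', a two-parameter family pinning down $C^2=0$) is not a proof. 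Your intermediate K\"uchle step also needs the exact chain $\sum m_i^2-m_r<\frac{r-1}{r^2}\bigl(\sum m_i\bigr)^2<\frac{(r-1)(r+1)}{r^2}\bigl(\sum m_i^2-m_r\bigr)$, which contradicts itself since $\frac{r^2-1}{r^2}<1$; the inequality you wrote down with the constant $r\cdot\min_i m_i$ is not what K\"uchle gives, as you yourself flag.

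The optimality assertion, which is part of the statement, is missing entirely from your plan: you only speculate that a Del Pezzo or blow-up example should exist and call it the main obstacle. The paper's examples are explicit and elementary. For $r=2$ take $X=\bbP^2$, $L=\calo_{\bbP^2}(1)$: the line through $P_1,P_2$ gives $\vare(L;P_1,P_2)=\frac12=\sqrt{\frac12}\,\vare_{\upper}(L;2)$ and $\bbP^2$ admits no fibration. For general $r$ take a rational normal scroll $X\subset\bbP^r$ with $L=\calo_X(1)$, so $L^2=r-1$: the hyperplane section $C$ through $r$ general points has quotient $\frac{L.C}{\sum\mult_{P_i}C}=\frac{r-1}{r}=\sqrt{\frac{r-1}{r}}\,\vare_{\upper}(L;r)$, any other irreducible curve $D$ through one of the points satisfies $L.D=L.C\ge\sum\mult_{P_i}D\cdot\mult_{P_i}C\ge\sum\mult_{P_i}D$, so $C$ is the unique Seshadri curve, while the fibers of the ruling have quotient $1$; hence the scroll, though fibered, is not fibered by Seshadri curves and equality in \eqnref{star} holds. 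Without such a family of examples the claim ``the factor $\sqrt{\frac{r-1}{r}}$ is optimal for every $r$'' remains unproved in your proposal.
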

\begin{proof}
   Let $P_1,\dots,P_r\in X$ be very general. Since
   $\vare(L;P_1,\dots,P_r)$ is not maximal, there exists a Seshadri curve
   $C_{P_1,\dots,P_r}$.  Moving the
   points around we obtain a non-trivial family $C_t=C_{(P_1)_t,\dots,(P_r)_t}$ of such curves.
   Let $m_1\geq \dots \geq m_r$ be non-negative integers such that
   $\mult_{(P_i)_t}C_t= m_i$ for the general member
   $C_t$ of the family.

   We proceed by induction on $r$ and begin with $r=2$ (note that
   our assertion is empty for $r=1$ and we cannot use the Hwang-Keum
   result as the first step of the induction).

   First we assume that $m_1\geq m_2\geq 1$. From Lemma
   \ref{xu} we obtain
   $$
      (m_1^2+m_2^2-m_2)\cdot L^2\leq (C_t)^2\cdot L^2.
   $$
   On the other hand, by our numerical assumptions we have
   $$
      (L.C_t)^2<(m_1+m_2)^2\cdot \frac14\cdot L^2.
   $$
   Both inequalities can be written in one line thanks to the
   Hodge Index Theorem:
   $$
      m_1^2+m_2^2-m_2<\frac14(m_1+m_2)^2.
   $$
   This is equivalent to
   $$
      2(m_1^2+m_2^2)+(m_1-m_2)^2 < 4m_2,
   $$
   which is false as $m_1\geq m_2\geq 1$.

   If $m_2=0$, then by the assumption of the Theorem
   $$
      \frac{L.C_t}{m_1}<\sqrt{\frac14 L^2}<\sqrt{\frac34 L^2}=\sqrt{\frac34}\vare_{\upper}(L;1)
   $$
   and in this case our assertion follows from Theorem \ref{hk}.

   For the induction step assume that the number of points $r$ is
   at least $3$. There are the following possibilities:\\
   (a) $m_1\geq \dots \geq m_r\geq 1$ and $m_1\geq 2$ or\\
   (b) $m_1=\dots =m_r=1$ or\\
   (c) $m_r$=0.

   In case (a) the Hodge Index
   Theorem together with Lemma \ref{xu} give:
   $$
      \frac{\sum m_i^2-m_r}{(\sum m_i)^2}L^2\leq
      \frac{L^2\cdot (C_t)^2}{(\sum m_i)^2}\leq
      \frac{(L.C_t)^2}{(\sum m_i)^2}<
      \frac{r-1}{r^2}L^2.
   $$
   Hence by Lemma \ref{kuechle} we obtain
   $$
     \sum m_i^2-m_r<
     \frac{r-1}{r^2}\left(\sum m_i\right)^2<
     \frac{(r-1)(r+1)}{r^2}\left(\sum m_i^2-m_r\right),
   $$
   a contradiction.

   Case (b) is also immediately excluded as $(C_t)^2\geq r-1$ by Lemma
   \ref{xu} and thus
   $$
      \frac{L.C_t}{\sum m_i}=
      \frac{L.C_t}{r}\geq
      \frac{\sqrt{r-1}}{r}\sqrt{L^2}
   $$
   by Hodge Index Theorem contradicting our assumption
   \eqnref{star}.

   In the last case (c) we have
   $$
     \frac{L.C_t}{\sum_{i=1}^{r-1} m_i}=
     \frac{L.C_t}{\sum_{i=1}^r m_i}<
     \sqrt{\frac{r-1}{r^2} L^2}<
     \sqrt{\frac{r-2}{(r-1)^2}L^2},
   $$
   where the first inequality is just our assumption \eqnref{star}
   and the second holds as $r\geq 3$.
   This shows that the assumptions of our Theorem are satisfied
   for $r-1$ and we conclude by induction.
\end{proof}
The following example shows that our bound is optimal.
\begin{example}
   Let $X=\bbP^2$, let $L=\calo_{\bbP^2}(1)$ and let $r=2$. Then
   the line through two given points $P_1, P_2$ computes
   $\vare(L;P_1,P_2)=\frac12=\sqrt{\frac{r-1}{r}}\cdot\vare_{\upper}(L;2)$ and
   there is no fibration on $\bbP^2$.

   More generally, let $r$ be given and
   let $X$ be a rational normal scroll in $\bbP^r$ and
   $L=\calo_X(1)$. The scroll is of course fibered but the curves in the ruling
   are not the Seshadri curves. To see this let $P_1,\dots,P_r\in X$ be points in
   general position. Then obviously for a fiber $F$ of the ruling
   passing through one or more points of the set $P_1,\dots,P_r$ we have
   $$
      \frac{L.F}{\sum\mult_{P_i} F}=1.
   $$
   On the other hand $r$ points span a hyperplane in $P^r$ i.e.
   there is a curve $C\in|L|$ passing through all of them with
   Seshadri quotient
   $$
      \frac{L.C}{\sum\mult_{P_i} C}=\frac{r-1}{r}=\sqrt{\frac{r-1}{r}}\cdot\vare_{\upper}(L;r)<1.
   $$
   Now, Bertini Theorem implies that $C$ is irreducible for general
   $P_1,\dots,P_r$. Let $D\neq C$ be an irreducible curve passing
   through at least one of the points $P_1,\dots,P_r$. Then
   $$L.D=L.C\geq \sum_{i=1}^r \mult_{P_i}D\cdot\mult_{P_i}C\geq
     \sum_{i=1}^r \mult_{P_i}D.$$
   This shows that the hyperplane section is the only Seshadri curve on $X$
   (In fact this is always the case if there is a Seshadri curve in $|L|$ itself).
   So $X$ is not fibered by the Seshadri curves in this case.
\end{example}
   Our results deal with situations when Seshadri constants are relatively small
   related to the upper bound. In fact it is conjectured that Seshadri
   constants at sufficiently many points are always maximal.
   Below we formulate this conjecture more exactly, it
   interpolates on the well known Nagata conjecture,
   see \cite{Sze01} for an effective statement, background and equivalent
   formulations.
\begin{conjecture}[Nagata-Biran]\label{NBconjecture}
   Let $X$ be a smooth projective variety and $L$ an ample line
   bundle on $X$. Then there exists a number $r_0$ (depending on
   $X$ and $L$) such that for all $r\geq r_0$
   $$
      \vare(L;r)=\vare_{\upper}(L;r).
   $$
\end{conjecture}
   Theorem \ref{multipoint} can be viewed as an asymptotic
   confirmation of the above conjecture. A similar result was
   obtained with different methods by Harbourne \cite{Har03}.
\begin{corollary}
   If a surface $X$ admits no fibration over a curve (e.g. a
   general surface of general type), then
   $$
      \vare(L;r)\geq\sqrt{\frac{r-1}r}\cdot\vare_{\upper}(L;r).
   $$
   In particular the Nagata-Biran conjecture holds on $X$ asymptotically.
\end{corollary}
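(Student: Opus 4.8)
The plan is to read the corollary off Theorem \ref{multipoint} by contraposition. I would first dispose of $r=1$ separately: there $\sqrt{(r-1)/r}=0$, so the asserted inequality merely records $\vare(L;1)>0$, which holds for any ample $L$. Now fix $r\geq 2$ and suppose, towards a contradiction, that inequality \eqnref{star} held, i.e.\ $\vare(L;r)<\sqrt{(r-1)/r}\cdot\vare_{\upper}(L;r)$. By Theorem \ref{multipoint} this would produce a fibration $f:X\lra D$ onto a curve $D$ --- and here one needs only the bare existence of some fibration onto a curve, not the sharper conclusion that its fibers are Seshadri curves. This contradicts the hypothesis that $X$ carries no fibration over a curve, so \eqnref{star} must fail; that is exactly the inequality $\vare(L;r)\geq\sqrt{(r-1)/r}\cdot\vare_{\upper}(L;r)$.

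For the asymptotic statement I would combine this with the elementary upper bound $\vare(L;r)\leq\vare_{\upper}(L;r)$ recalled in Section \ref{sec:1}, obtaining
$$
   \sqrt{\frac{r-1}{r}}\;\leq\;\frac{\vare(L;r)}{\vare_{\upper}(L;r)}\;\leq\;1.
$$
Since the left side tends to $1$ as $r\to\infty$, so does the ratio, which is the asymptotic form of Conjecture \ref{NBconjecture}. I would point out explicitly that this is weaker than the conjecture itself, which demands exact equality for every $r$ past some $r_0$; what is proved is only that the relative defect $1-\vare(L;r)/\vare_{\upper}(L;r)$ tends to zero.

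There is really no obstacle in the argument --- the corollary is a formal consequence of Theorem \ref{multipoint} and the two displayed inequalities are immediate. The one point deserving a line of comment is the parenthetical claim that a general surface of general type admits no fibration over a curve: here I would note that a fibration over a curve supplies a nonzero divisor class of self-intersection zero, whereas a surface of Picard number one has no such class, and that a very general surface of degree at least four in $\bbP^3$ has Picard number one by the Noether-Lefschetz theorem (and is of general type once the degree is at least five); more generally the surfaces admitting a fibration form a non-dense subset of the moduli of surfaces of general type.
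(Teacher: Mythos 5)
Your argument is correct and coincides with what the paper intends: the inequality is just the contrapositive of Theorem \ref{multipoint} (the paper offers no separate proof, the corollary being immediate), and the asymptotic statement follows from squeezing the ratio $\vare(L;r)/\vare_{\upper}(L;r)$ between $\sqrt{(r-1)/r}$ and $1$. Your added remarks on $r=1$ and on why a general surface of general type has no fibration are sound but not needed beyond what the paper assumes.
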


\paragraph*{Acknowledgement.}
   We would like to thank H. Esnault and E. Viehweg for inviting us to
   the Mathematics Department in Essen where
   most of this work was prepared.
   Financial support was made possible by the Leibnitz Preis
   of Esnault and Viehweg. Further we would
   like to thank Lawrence Ein who brought us to idea of looking at
   Theorem \ref{itiscubic}. Finally we would like to thank both
   referees for helpful remarks and comments.

   The second author was partially supported by
   KBN grant 1 P03 A 008 28.



\bigskip
\small
   Wioletta Syzdek, Tomasz Szemberg

   Instytut Matematyki AP,
   PL-30-084 Krak\'ow, Poland

\nopagebreak
   E-mail: \texttt{syzdek@ap.krakow.pl},

\nopagebreak
   E-mail: \texttt{szemberg@ap.krakow.pl}
\end{document}